\numberwithin{equation}{section}
\newcommand{\Z}{\mathbb{Z}}
\newcommand{\N}{\mathbb{N}}
\newcommand{\T}{\mathbb{T}}
\newcommand{\R}{\mathbb{R}}
\newcommand{\F}{\mathcal{F}}
\newcommand{\Sh}{\mathcal{S}}
\newcommand{\supp}{\mathop{\mathrm{supp}}}
\newcommand{\pa}{\partial}
\newcommand{\vphi}{\varphi}
\newcommand{\Op}{\mathop{\mathrm{Op}}}
\theoremstyle{plain}
\newtheorem{thm}{Theorem}[section]
\newtheorem{lem}[thm]{Lemma}
\newtheorem*{thmA}{Theorem A}
\theoremstyle{definition}
\begin{document}
\title[Bilinear pseudo-differential operators]
{A remark on the relationship $1/p = 1/p_1 + 1/p_2$ for boundedness of bilinear pseudo-differential operators with exotic symbols}

\author[T. Kato]{Tomoya Kato}
\author[N. Shida]{Naoto Shida}

\date{\today}

\address[T. Kato]
{Division of Pure and Applied Science, 
Faculty of Science and Technology, Gunma University, 
Kiryu, Gunma 376-8515, Japan}
\address[N. Shida]
{Department of Mathematics, 
Graduate School of Science, Osaka University, 
Toyonaka, Osaka 560-0043, Japan}

\email[T. Kato]{t.katou@gunma-u.ac.jp}
\email[N. Shida]{u331453f@ecs.osaka-u.ac.jp}

\keywords{Bilinear pseudo-differential operators, bilinear H\"ormander symbol classes}

\subjclass[2020]{35S05, 42B15, 42B35}

\thanks{This work was partially supported by JSPS KAKENHI Grant Numbers JP20K14339 (Kato).}

\begin{abstract}
We consider the bilinear pseudo-differential operators with symbols in the bilinear  
H\"ormander classes $BS_{\rho, \rho}^m$, $0 < \rho < 1$. 
In this paper, we show that
the condition $1/p = 1/p_1 + 1/p_2$ is necessary 
when we consider the boudnedness from $H^{p_1} \times H^{p_2}$ to $L^p$ 
of those operators for the critical case.
\end{abstract}

\maketitle

\section{Introduction} \label{Intro}
For $m \in \R$ and $0 \le \rho, \delta \le 1$, 
the bilinear H\"ormander symbol class $BS^{m}_{\rho, \delta}$ 
consists of all functions $\sigma(x, \xi_1, \xi_2) \in C^\infty(\R^n \times \R^n \times \R^n)$ 
such that
\begin{equation*}
 |\pa_x^\alpha \pa_{\xi_1}^{\beta_1} \pa_{\xi_2}^{\beta_2} \sigma(x, \xi_1, \xi_2)|
 \le
 C_{\alpha, \beta_1, \beta_2}
 (1 + |\xi_1| + |\xi_2|)^{m+\delta|\alpha|- \rho(|\beta_1| + |\beta_2|)}
\end{equation*}
for all multi-indices 
$\alpha, \beta_1, \beta_2 \in \N^n_0 = \{0, 1, 2, \dots \}^n$.
For a symbol $\sigma \in BS^m_{\rho, \delta}$, 
the bilinear pseudo-differential operator $T_\sigma$ is defined by
\begin{align*}
T_\sigma(f_1, f_2)(x)
=
\frac{1}{(2\pi)^{2n}}
\int_{(\R^n)^2}
e^{i x \cdot (\xi_1+ \xi_2)}
\sigma(x, \xi_1, \xi_2)
\widehat{f_1}(\xi_1)
\widehat{f_2}(\xi_2)
\,
d\xi_1 d\xi_2
\end{align*}
for $f_1, f_2 \in \Sh(\R^n)$.

Let $X_1$, $X_2$ and $Y$ be function spaces on $\R^n$
equipped with quasi-norms $\|\cdot\|_{X_1}$, $\|\cdot\|_{X_2}$ and $\|\cdot\|_Y$.
If there exists a constant $C>0$ such that the estimate
\begin{equation} \label{boundedness}
\|T_\sigma(f_1, f_2)\|_Y \le C \|f_1\|_{X_1}\|f_2\|_{X_2},
\quad
f_1 \in \Sh \cap X_1,
\quad
f_2 \in \Sh \cap X_2
\end{equation}
holds, then, with a slight abuse of terminology, 
we say that the operator $T_\sigma$ is bounded 
from $X_1 \times X_2$ to $Y$. 
The smallest constant $C$ of \eqref{boundedness} is denoted by $\|T_\sigma\|_{X_1 \times X_2 \to Y}$. 
$\Op(BS^m_{\rho, \delta})$ denotes  the class of all bilinear pseudo-differential operators $T_\sigma$ 
with $\sigma \in BS^m_{\rho, \delta}$. 
If all $T_\sigma$ with $\sigma \in BS^m_{\rho, \delta}$ are bounded from $X_1 \times X_2$ to $Y$, then we write
$\Op(BS^m_{\rho, \delta}) \subset B(X_1 \times X_2 \to Y)$.

The boundedness properties of the bilinear pseudo-differential operators with symbols 
in the bilinear H\"ormander classes have been studied in many researches.
In the case $\rho = 1$ and $\delta < 1$, 
bilinear pseudo-differential operators with symbols in $BS^{0}_{1, \delta}$ 
can be regarded as bilinear Calder\'on-Zygmund operators in the sense of Grafakos-Torres \cite{GT}, 
and they are bounded from $L^{p_1} \times L^{p_2}$ to $L^p$, 
$1 < p_1, p_2 < \infty$, $1/p = 1/p_1 + 1/p_2$. 
For details, see Coifman-Meyer \cite{CM}, B\'enyi-Torres \cite{BT-Symbolic} 
and B\'enyi-Maldonado-Naibo-Torres \cite{BMNT}.
We remark that the assumption 
$1/p = 1/p_1 + 1/p_2$
is reasonable since a symbol $\sigma \equiv 1$ is in $BS^0_{1, \delta}$ 
and the corresponding bilinear operator $T_\sigma$ is the product of two functions.

In this paper, we shall consider the case $0 \le \rho = \delta <1$.
In the linear case, the celebrated Calder\'on-Vaillancourt theorem \cite{CV} states 
that the linear pseudo-differential operator with the symbol satisfying the estimate 
\begin{equation*}
|\pa^\alpha_x \pa^\beta_\xi \sigma(x, \xi)|
\le
C_{\alpha, \beta} (1 + |\xi|)^{\rho|\alpha|-\rho|\beta|}
\end{equation*}
is bounded on $L^2$. 
However, in the bilinear settings,  
it was pointed out in B\'enyi-Torres \cite{BT-Almost} that
for any $1 \le p, p_1, p_2 < \infty$ satisfying $1/p = 1/p_1 + 1/p_2$, 
there exists a symbol $\sigma \in BS^0_{\rho, \rho}$ such that
$T_\sigma$ is not bounded from $L^{p_1} \times L^{p_2}$ to $L^p$.
Thus, there is certain difference between linear and bilinear cases.

For $0 \le \rho < 1$ and $0 < p_1, p_2 \le \infty$, we define
\begin{align*}
&m_\rho(p_1, p_2) = (1-\rho) m_0(p_1, p_2),
\\
&m_0(p_1, p_2)
=
-n 
\left(
\max
\left\{
\frac{1}{2}, 
\frac{1}{p_1},
\frac{1}{p_2},
1- \frac{1}{p_1} - \frac{1}{p_2},
\frac{1}{p_1}+ \frac{1}{p_2} -\frac{1}{2}
\right\}
\right).
\end{align*}
We see that 
the explicit values of $m_0(p_1, p_2)$ are 
\begin{align*}
m_0(p_1, p_2)
=
\begin{cases}
n/p_1 + n/p_2 - n \quad & \text{if}\ (1/p_1, 1/p_2) \in J_0;
\\
-n/2 \quad & \text{if}\ (1/p_1, 1/p_2) \in J_1;
\\
-n/p_2 \quad & \text{if}\ (1/p_1, 1/p_2) \in J_2;
\\
-n/p_1 \quad & \text{if}\ (1/p_1, 1/p_2) \in J_3;
\\
n/2 - n/p_1 - n/p_2 \quad & \text{if}\ (1/p_1, 1/p_2) \in J_4,
\end{cases}
\end{align*}
where $J_0, \dots, J_4$ are the following 5 regions;

\begin{center}
\begin{tikzpicture}[scale=1.0]
\draw[very thick, black][->] (0, 0) -- (4.6, 0) node [right] {$1/p_1$};
\draw[very thick, black][->] (0, 0) -- (0, 4.6) node [above] {$1/p_2$};
\draw (0, 0)  node [below] {\scriptsize$0$};
\draw (2, 0)  node [below] {\scriptsize$1/2$};
\draw (0, 2)  node [left] {\scriptsize$1/2$};
\draw (4, 0)  node [below] {\scriptsize$1$};
\draw (0, 4)  node [left] {\scriptsize$1$};
\draw (0.6, 0.9)  node [below] {$J_0$};
\draw (1.4, 1.7)  node [below] {$J_1$};
\draw (1.0, 3.5)  node [below] {$J_2$};
\draw (3.3, 1.2)  node [below] {$J_3$};
\draw (3.3, 3.5)  node [below] {$J_4$};
\draw[very thick, black][-] (2, 0) -- (2, 4.4);
\draw[very thick, black][-] (0, 2) -- (4.4, 2);
\draw[very thick, black][-] (0, 2) -- (2, 0);
\draw[very thick, black][-] (-0.1, 4.0) -- (0.1, 4.0);
\draw[very thick, black][-] (4.0, -0.1) -- (4.0, 0.1);
\end{tikzpicture}
\end{center}
Then, Miyachi-Tomita \cite{MT-IUMJ, MT-I, MT-II} proved the following theorem.
\begin{thmA}
Let $0 \le \rho < 1$, $0 < p_1, p_2, p \le \infty$ and $1/p= 1/p_1 + 1/p_2$.
Then all bilinear pseudo-differential operators with symbols in $BS^{m_\rho(p_1, p_2)}_{\rho, \rho}$ 
are bounded from $H^{p_1} \times H^{p_2}$ to $L^p$, 
where $L^p$ should be replaced by $BMO$ if $p_1 = p_2 = p= \infty$.
\end{thmA}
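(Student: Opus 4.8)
The plan is to prove Theorem~A along the lines that have become standard for sharp bilinear symbolic calculus: a Littlewood--Paley decomposition of the symbol in the frequency variables, a Fourier-series discretization of each dyadic block turning $T_\sigma$ into a rapidly convergent superposition of elementary operators built from (modulated) frequency projections onto small cubes, and finally a case analysis in which the five regions $J_0,\dots,J_4$ and the value $m_0(p_1,p_2)$ emerge from optimizing Hölder exponents. Fix $\sigma\in BS^{m_\rho(p_1,p_2)}_{\rho,\rho}$ and a partition $1=\sum_{j\ge0}\psi_j(\xi_1,\xi_2)$ with $\supp\psi_0\subset\{|\xi_1|+|\xi_2|\le2\}$ and $\supp\psi_j\subset\{|\xi_1|+|\xi_2|\sim2^j\}$; put $\sigma_j=\psi_j\sigma$ and $T_\sigma=\sum_{j\ge0}T_{\sigma_j}$. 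Since $\rho<1$, on each block the symbol is, after a harmless dilation, essentially flat on cubes of side $2^{j\rho}$ in $\xi_1$ and in $\xi_2$ — the uncertainty scale dictated by $|\pa_x^\alpha\sigma|\lesssim2^{j\rho|\alpha|}2^{jm_\rho(p_1,p_2)}$ — so partitioning the annuli into such cubes and expanding $\sigma_j$ in a Fourier series over the resulting boxes yields
\begin{equation*}
T_{\sigma_j}(f_1,f_2)(x)
=\sum_{\mu\in\Z^{3n}}\ \sum_{\nu}\ c^{\,j}_{\mu,\nu}(x)\,\bigl(\square^{(1)}_{j,\nu}f_1\bigr)(x)\,\bigl(\square^{(2)}_{j,\nu}f_2\bigr)(x),
\end{equation*}
where $\square^{(i)}_{j,\nu}$ is a smooth projection onto a cube of side $2^{j\rho}$ in the $i$-th frequency variable, $|c^{\,j}_{\mu,\nu}(x)|\lesssim_L(1+|\mu|)^{-L}2^{jm_\rho(p_1,p_2)}$ for every $L$, and $c^{\,j}_{\mu,\nu}$ varies slowly at scale $2^{-j\rho}$. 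The rapid decay in $\mu$ lets us sum that index at the end, so it suffices to treat the model operator with $\mu=0$, whose coefficients I abbreviate $c^{\,j}_\nu$.

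\emph{The block estimate.} For the model operator I would apply Hölder's inequality in $x$ together with vector-valued square-function inequalities in $\nu$: writing $F_\nu=\square^{(1)}_{j,\nu}f_1$ and $G_\nu=\square^{(2)}_{j,\nu}f_2$, bound $\sum_\nu|c^{\,j}_\nu F_\nu G_\nu|$ by $2^{jm_\rho(p_1,p_2)}(\sum_\nu|F_\nu|^{a})^{1/a}(\sum_\nu|G_\nu|^{a'})^{1/a'}$ for conjugate exponents $a,a'$, and then split $\|\cdot\|_{L^p}$ into $\|(\sum_\nu|F_\nu|^{a})^{1/a}\|_{X_1}\|(\sum_\nu|G_\nu|^{a'})^{1/a'}\|_{X_2}$ with $X_i=L^{p_i}$, or $H^{p_i}$ when $p_i<1$. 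For $p_i\ge2$ the cube square function is bounded with no loss (Rubio de Francia), for $1\le p_i<2$ it loses $(\#\{\nu\})^{1/p_i-1/2}\sim2^{j(1-\rho)n(1/p_i-1/2)}$, and choosing $a$ away from $2$ trades such losses between the two factors. The dilation bookkeeping converts the total of all these powers of $2^j$ into $2^{j(1-\rho)m_0(p_1,p_2)}=2^{jm_\rho(p_1,p_2)}$, and the five optimal choices of $a$, together with the decision whether to measure $F_\nu$ and $G_\nu$ in $L^2$-based or $L^{p_i}$-based norms, correspond precisely to the five regions $J_0,\dots,J_4$. The slow variation of $c^{\,j}_\nu$ at scale $2^{-j\rho}$ is harmless because it is essentially constant on the spatial support of $\square^{(i)}_{j,\nu}$, up to negligible errors, or, in the Hardy-space range, after passing to an atomic decomposition.

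\emph{Reassembly and the endpoint.} Since $m_\rho(p_1,p_2)=(1-\rho)m_0(p_1,p_2)$ and $m_0(p_1,p_2)\le-n/2<0$, the block bounds $\|T_{\sigma_j}(f_1,f_2)\|_{L^p}\lesssim2^{jm_\rho(p_1,p_2)}\|f_1\|_{H^{p_1}}\|f_2\|_{H^{p_2}}$ sum to the desired estimate: for $p\ge1$ by the triangle inequality, and for $p<1$ by $\|\sum_jg_j\|_{L^p}^{p}\le\sum_j\|g_j\|_{L^p}^{p}$. The case $p_1=p_2=p=\infty$ needs a separate argument for $BMO$ in place of $L^\infty$: one proves $T_\sigma\colon L^\infty\times L^\infty\to BMO$ either directly, by estimating the $BMO$ seminorm of the model operator through a Carleson-measure bound, or by duality from a transpose of $T_\sigma$ — which again has a symbol in a class of the same type, since $\rho=\delta$ — mapping $H^1\times L^\infty\to L^1$.

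\emph{Main obstacle.} I expect the genuine difficulty to be the case analysis when $\min(p_1,p_2)<1$, i.e.\ over the Hardy-space parts of $J_2$, $J_3$ and $J_4$. There $\square^{(i)}_{j,\nu}$ is not bounded on $L^{p_i}$ and the clean square-function argument breaks, so it must be replaced by an argument in $H^{p_i}$ that keeps the frequency localization to cubes of side $2^{j\rho}$ and the spatial scale $2^{-j\rho}$ rigidly synchronized — typically via an atomic or molecular decomposition on which the projections and the coefficients $c^{\,j}_\nu$ act favorably — and one must check that the resulting loss is \emph{exactly} of the type $2^{j(1-\rho)n(1/p_i-1/2)}$, so that it is annihilated by $2^{jm_\rho(p_1,p_2)}$ and no more. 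Pinning down these sharp loss exponents uniformly across the five regions, and disposing of the $p=\infty$ endpoint, is the technical heart of the Miyachi--Tomita papers; the rest is bookkeeping.
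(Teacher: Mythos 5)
First, note that this paper does not prove Theorem~A at all: it is quoted as the result of Miyachi--Tomita \cite{MT-IUMJ, MT-I, MT-II}, so there is no in-paper argument to compare yours against. The only question is whether your sketch would stand on its own, and as written it would not, for two concrete reasons beyond the parts you yourself defer.

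First, after the Fourier-series expansion of $\sigma_j$ the model operator is a double sum over independent cube indices $(\nu_1,\nu_2)$, one for each frequency variable, not a single shared index $\nu$; writing $\sum_\nu c^{\,j}_\nu(\square^{(1)}_{j,\nu}f_1)(\square^{(2)}_{j,\nu}f_2)$ silently diagonalizes the sum. The whole difficulty is the off-diagonal combinatorics: there are about $2^{j(1-\rho)n}$ cubes in each variable, and counting how many pairs $(\nu_1,\nu_2)$ contribute to a given output frequency is exactly where the critical exponent $m_0(p_1,p_2)$ comes from. Second, and more seriously, the reassembly step is inconsistent. If the square-function and counting losses exactly consume the factor $2^{jm_\rho(p_1,p_2)}$ coming from the symbol size --- which is what the sharpness of $m_\rho(p_1,p_2)$ forces --- then your block estimate is only $O(1)$ in $j$, and $\sum_j T_{\sigma_j}$ cannot be summed by the triangle inequality or the $p$-th power trick. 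If instead a genuine factor $2^{jm_\rho(p_1,p_2)}<1$ survived after the losses, the identical argument would give boundedness for symbols of order $0$, contradicting the B\'enyi--Torres counterexample \cite{BT-Almost} and the necessity of $m\le m_\rho(p_1,p_2)$ recorded in \cite{MT-IUMJ}. At the critical order, summing the dyadic blocks is the heart of the matter and needs extra orthogonality --- exploiting the $x$-regularity $\delta=\rho$, $L^2$-based amalgam or atomic arguments, and duality/interpolation between special endpoint estimates such as $L^2\times L^2\to L^1$ --- which is precisely what the Miyachi--Tomita papers supply and what your outline, by its own admission, leaves to them, along with the $p_i\le 1$ and $BMO$ endpoints. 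As it stands the proposal is a plausible roadmap to the known proof, not a proof.
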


\noindent
For the definition of the Hardy space $H^p$ and the space $BMO$, see Section \ref{preliminaries}.
In \cite{MT-IUMJ}, it was also proved that,
for $0 < p_1, p_2, p \le \infty$ with $1/p_1 + 1/p_2 = 1/p$, 
the class $BS^{m}_{\rho, \rho}$ with $m > m_{\rho}(p_1, p_2)$ does not assure 
the boundedness from $H^{p_1} \times H^{p_2}$ to $L^p$.
In this sense, the class $BS_{\rho, \rho}^{m_\rho(p_1, p_2)}$ can be regarded as a critical class.
We remark that the sharper boundedness was proved in \cite{MT-IUMJ} for the case $\rho = 0$ 
and in Kato \cite{Kato} for the case $0 \le \rho < 1$ and $(1/p_1, 1/p_2) \in J_1$, $1/p= 1/p_1 + 1/p_2$.
We also notice that the boundedness for the case $0 \le \rho < 1/2$ and $p_1 = p_2 = p= \infty$ 
was also obtained in Naibo \cite{Naibo}.
For the subcritical case $m < m_\rho(p_1, p_2)$, see 
B\'enyi-Bernicot-Maldonado-Naibo-Torres \cite{BBMNT} 
and 
Michalowski-Rule-Staubach \cite{MRS}. 

Now, we focus on the condition $1/p = 1/p_1 +1/p_2$, 
which is often assumed when the $L^{p_1} \times L^{p_2} \to L^p$ boundedness is considered.
It may be difficult to determine whether the assumption $1/p = 1/p_1 +1/p_2$ in Theorem A is valid or not
since the symbol $\sigma \equiv 1$ is not in $BS_{\rho, \rho}^{m_\rho(p_1, p_2)}$ 
for any $0\le \rho < 1$ and $0< p_1, p_2 \le \infty$.
Recently, Kato-Miyachi-Tomita \cite{KMT-bilinear, KMT-multilinear}  proved  
by using $L^2$-based amalgam spaces that 
if $(1/p_1, 1/p_2) \in J_1$, $1 \le p \le 2$ and $1/p \le 1/p_1 + 1/p_2$, 
then all bilinear operators $T_\sigma$ with $\sigma \in BS^{m_0(p_1, p_2)}_{0, 0} = BS^{-n/2}_{0, 0}$ 
are bounded in $L^{p_1} \times L^{p_2} \to L^p$.
From this result, we conclude that  the condition $1/p = 1/p_1 + 1/p_2$ is not always necessary
when we discuss the boundedness of bilinear pseudo-differential operators with symbols 
in the critical class of $\rho = 0$.

The purpose of this paper is to show that the condition $1/p = 1/p_1 + 1/p_2$
is essential to have the boundedness of bilinear pseudo-differential operators with 
symbols in $BS_{\rho, \rho}^{m_\rho(p_1, p_2)}$, $0< \rho < 1$.
The following is the main result of this paper.


\begin{thm} \label{mainthm}
Let $0 < \rho < 1$ and $0 < p_1, p_2, p \le \infty$. 
If 
all bilinear pseudo-differential operators with symbols in 
$BS^{m_{\rho}(p_1, p_2)}_{\rho, \rho}$ are bounded from 
$H^{p_1}(\R^n) \times H^{p_2}(\R^n)$ to $L^p(\R^n)$
when $p < \infty$  
or
to
$BMO(\R^n)$ when $p = \infty$, 
then $1/p = 1/p_1 + 1/p_2$.
\end{thm}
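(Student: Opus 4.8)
The plan is to test the assumed boundedness against two families of symbols, after first upgrading the hypothesis, by a standard closed graph / uniform boundedness argument (carried out with quasi-norms), to a quantitative estimate: there are $C>0$ and $K\in\N$ with $\|T_\sigma\|_{H^{p_1}\times H^{p_2}\to L^p}\le C\,\|\sigma\|_{(K)}$ for all $\sigma\in BS^{m_\rho(p_1,p_2)}_{\rho,\rho}$, where $\|\cdot\|_{(K)}$ is the $K$-th seminorm of the class (and $L^p$ is read as $BMO$ when $p=\infty$). Note $m_\rho(p_1,p_2)<0$ always. I will deduce $1/p\le 1/p_1+1/p_2$ and the reverse inequality separately.

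For $1/p\le1/p_1+1/p_2$ I would use product-type symbols. Fix $\chi\in C_c^\infty(\R^n)$ with $\chi=1$ on $\overline{B(0,1)}$ and put $\sigma_N(x,\xi_1,\xi_2)=N^{m_\rho(p_1,p_2)}\chi(\xi_1/N)\chi(\xi_2/N)$ for $N\ge1$. Since $m_\rho(p_1,p_2)<0$, a direct check gives $\sigma_N\in BS^{m_\rho(p_1,p_2)}_{\rho,\rho}$ with $\|\sigma_N\|_{(K)}$ bounded uniformly in $N$, and $T_{\sigma_N}(f_1,f_2)=N^{m_\rho(p_1,p_2)}f_1f_2$ whenever $\widehat{f_1},\widehat{f_2}$ are supported in $\overline{B(0,N)}$. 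Taking $N=1$ already gives $\|f_1f_2\|_{L^p}\lesssim\|f_1\|_{H^{p_1}}\|f_2\|_{H^{p_2}}$ for all band-limited $f_1,f_2$; using that $\|f\|_{H^{p_i}}\approx\|f\|_{L^{p_i}}$ for band-limited $f$ and testing against $f_1=f_2=\sum_{\ell=1}^{R}h(\cdot-\ell v)$, where $\widehat h\in C_c^\infty(B(0,1))$ and $|v|$ is large, the left side is $\approx R^{1/p}\|h^2\|_{L^p}$ while the right side is $\approx R^{1/p_1+1/p_2}\|h\|_{L^{p_1}}\|h\|_{L^{p_2}}$, forcing $R^{1/p-1/p_1-1/p_2}\lesssim1$ for all $R$, hence $1/p\le1/p_1+1/p_2$.

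For the reverse inequality I would argue by contradiction. Assume $1/p<1/q$ where $1/q:=1/p_1+1/p_2$ (so $q<p$; the case $p_1=p_2=\infty$ is already covered by the first step, so $q<\infty$), and show that the uniform estimate forces $\Op(BS^{\mu}_{\rho,\rho})\subset B(H^{p_1}\times H^{p_2}\to L^q)$ for every $\mu$ in the nonempty range $m_\rho(p_1,p_2)<\mu<m_\rho(p_1,p_2)+\rho n(\tfrac1q-\tfrac1p)$ — the range being nonempty precisely because $\rho>0$ — which contradicts the sharpness of Theorem A (see \cite{MT-IUMJ}), stating that $BS^{\mu}_{\rho,\rho}$ is \emph{not} contained in $B(H^{p_1}\times H^{p_2}\to L^q)$ for any $\mu>m_\rho(p_1,p_2)$. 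To produce this $L^q$-boundedness, given $\sigma\in BS^{\mu}_{\rho,\rho}$ I would decompose $\sigma=\sum_{j\ge0}\sigma_j$ with $\sigma_j$ frequency-localized in $\{|\xi_1|+|\xi_2|\sim 2^j\}$ ($\sigma_0$ low-frequency and harmless), write $\sigma_j=2^{j(\mu-m_\rho(p_1,p_2))}\tilde\sigma_j$ with $\tilde\sigma_j\in BS^{m_\rho(p_1,p_2)}_{\rho,\rho}$ and $\|\tilde\sigma_j\|_{(K)}\lesssim\|\sigma\|_{(K')}$ uniformly in $j$, and apply the uniform estimate to $\tilde\sigma_j$. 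The crucial geometric fact — and this is exactly where $0<\rho<1$ enters — is that the kernel of $T_{\tilde\sigma_j}$ is concentrated on $\{|x-y_1|+|x-y_2|\lesssim 2^{-j\rho}\}$, smoothing at frequency $2^j$ taking place at the dual spatial scale $2^{-j\rho}$. Splitting $f_1,f_2$ by a partition of unity at spatial scale $2^{-j\rho}$ into pieces $f_{i,\nu}$ (which remain band-limited to $\{|\xi|\lesssim2^j\}$, hence $\|f_{i,\nu}\|_{H^{p_i}}\approx\|f_{i,\nu}\|_{L^{p_i}}$), only nearly coinciding indices interact, each $T_{\tilde\sigma_j}(f_{1,\nu},f_{2,\nu'})$ is essentially supported in a ball of radius $\sim2^{-j\rho}$, and H\"older's inequality on that ball converts the $L^p$-bound into an $L^q$-bound with a gain $2^{-j\rho n(1/q-1/p)}$. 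Summing over $\nu$ via the identity $q/p_1+q/p_2=1$, one gets
\[
\|T_{\sigma_j}(f_1,f_2)\|_{L^q}\lesssim 2^{\,j\bigl(\mu-m_\rho(p_1,p_2)-\rho n(1/q-1/p)\bigr)}\,\|\sigma\|_{(K')}\,\|\Delta_jf_1\|_{L^{p_1}}\|\Delta_jf_2\|_{L^{p_2}},
\]
and since the power of $2^j$ is negative and $\|\Delta_jf_i\|_{L^{p_i}}\lesssim\|f_i\|_{H^{p_i}}$, summing in $j$ (with the quasi-triangle inequality in $L^q$) yields $\|T_\sigma(f_1,f_2)\|_{L^q}\lesssim\|\sigma\|_{(K')}\|f_1\|_{H^{p_1}}\|f_2\|_{H^{p_2}}$, as claimed. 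Together with the first step this gives $1/p=1/p_1+1/p_2$. (When $p=\infty$ a minor variant is needed, arranging by a further decomposition in the size of $\xi_1+\xi_2$ that the relevant outputs are frequency-localized to a single dyadic annulus, on which $\|\cdot\|_{BMO}\approx\|\cdot\|_{L^\infty}$.)

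The main obstacle I anticipate is precisely the frequency-to-space localization in the last step: establishing the $2^{-j\rho}$ kernel concentration for $\tilde\sigma_j$, organizing the interaction of the spatial pieces $f_{i,\nu}$ through that kernel, and keeping track of how the $H^{p_i}$ quasi-norms recombine (plus the extra bookkeeping at the $BMO$ endpoint). The remaining ingredients — the closed-graph reduction, the symbol estimates for $\sigma_N$, and the lacunary test functions — are routine.
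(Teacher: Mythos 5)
Your first step (product symbols plus sums of translated bumps, giving $1/p\le 1/p_1+1/p_2$) is essentially sound, but one ingredient you use there is false as stated: for $p_i\le 1$ it is \emph{not} true that $\|f\|_{H^{p_i}}\approx\|f\|_{L^{p_i}}$ for band-limited $f$. A Schwartz function whose Fourier transform is supported near the origin and has nonzero integral lies in every $L^p$ but in no $H^p$ with $p\le 1$ (the correct equivalence is with the \emph{local} Hardy space $h^p$). This is repairable in step one: take $\widehat h$ supported in an annulus and bound $\|\sum_{\ell\le R}h(\cdot-\ell v)\|_{H^{p_i}}\le R^{1/p_i}\|h\|_{H^{p_i}}$ by translation invariance and the $p_i$-triangle inequality.

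The reverse inequality, which is the heart of the theorem, is where the proposal has genuine gaps. (i) The self-improvement step ($L^p$ target upgraded to $L^q$, $1/q=1/p_1+1/p_2$, for symbols of slightly larger order) is only sketched, and the version you describe does not go through as written: after splitting at spatial scale $2^{-j\rho}$, the far interactions cannot be dismissed by kernel size alone, since the kernel of $T_{\tilde\sigma_j}$ is only bounded by $2^{j(m_\rho(p_1,p_2)+2n)}\bigl(1+2^{j\rho}(|x-y_1|+|x-y_2|)\bigr)^{-N}$; for cells at lattice distance $|\nu-\mu|$ this gives decay merely in $|\nu-\mu|$, with a prefactor of size $2^{j(1-\rho)(m_0(p_1,p_2)+2n)}$, a \emph{positive} power of $2^j$ that swamps the gain $2^{-j\rho n(1/q-1/p)}$. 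To handle the off-diagonal pieces you would have to re-use the assumed boundedness (e.g.\ through the symbols $2^{j\rho|\gamma|}\partial_{\xi_1}^{\gamma}\tilde\sigma_j$, which stay uniformly in the class) rather than the crude kernel bound; this is exactly the step you flag as the anticipated obstacle and leave open. (ii) For $p_i\le1$ the recombination $\sum_\nu\|f_{i,\nu}\|_{H^{p_i}}^{p_i}\lesssim\|f_i\|_{H^{p_i}}^{p_i}$ again rests on the false band-limited $H^p\approx L^p$ claim; smooth spatial cutoffs of an $H^{p_i}$ function need not belong to $H^{p_i}$ at all, so the Hardy range is not covered. (iii) At $p=\infty$ a $BMO$ bound on a ball of radius $2^{-j\rho}$ controls oscillation but not the average, and the proposed extra decomposition in $|\xi_1+\xi_2|\sim 2^l$ destroys uniformity in $BS^{m_\rho(p_1,p_2)}_{\rho,\rho}$ when $l\ll j\rho$ (derivatives of that cutoff cost $2^{-l}$, not $2^{-j\rho}$). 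By contrast, the paper proves the key necessity only for Lebesgue exponents $1<p_1,p_2<\infty$, $0<p<\infty$, by an explicit counterexample: symbols with Rademacher-randomized coefficients on a $2^{j\rho}$-lattice tested against Wainger's series $\sum_{k\neq0}|k|^{-b}e^{i|k|^a}e^{ik\cdot x}$, with Khintchine's inequality producing a lower bound that forces $1/p\ge1/p_1+1/p_2$ precisely because $\rho>0$; the cases $p_i\le1$, $p_i=\infty$ and the $BMO$ target are then reached by transposition/duality and interpolation with the known $L^2\times L^2\to L^1$ and $L^a\times L^b\to L^2$ bounds. Your idea of deriving a contradiction with the sharpness result of Miyachi--Tomita \cite{MT-IUMJ} is a legitimate alternative endgame, but the decisive estimates behind it are not established in the proposal.
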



The contents of this paper are as follows.
In Section \ref{preliminaries}, we recall some basic notations and preliminary facts.
In Section \ref{mainsec}, we prove Theorem \ref{mainthm}.

\section{Preliminaries}\label{preliminaries}
For two nonnegative quantities $A$ and $B$,
the notation $A \lesssim B$ means that
$A \le CB$ for some unspecified constant $C>0$,
and $A \approx B$ means that
$A \lesssim B$ and $B \lesssim A$.
For $1 \le p \le \infty$,
$p'$ is the conjugate exponent of $p$,
that is, $1/p+1/p'=1$.


Let $\Sh(\R^n)$ and $\Sh'(\R^n)$ be the Schwartz space of
rapidly decreasing smooth functions on $\R^n$ and its dual,
the space of tempered distributions, respectively.
We define the Fourier transform $\F f$
and the inverse Fourier transform $\F^{-1}f$
of $f \in \Sh(\R^n)$ by
\[
\F f(\xi)
=\widehat{f}(\xi)
=\int_{\R^n}e^{-ix\cdot\xi} f(x)\, dx
\quad \text{and} \quad
\F^{-1}f(x)
=\frac{1}{(2\pi)^n}
\int_{\R^n}e^{ix\cdot \xi} f(\xi)\, d\xi.
\]


For a measurable subset $E \subset \R^n$, 
the Lebesgue space $L^p(E)$, $0< p \le \infty$, 
is the set of all those mesurable functions $f$ on $E$ 
such that 
$\|f\|_{L^p(E)} = (\int_E |f(x)|^p dx)^{1/p} < \infty$ if $0< p< \infty$ or 
$\|f\|_{L^\infty(E)} = \mathrm{ess} \sup_{x \in E} |f(x)| < \infty$ if $p = \infty$.
We also write $\|f\|_{L^p(E)} = \|f(x)\|_{L_x^p(E)}$
when we want to indicate the variable explicitly.


We recall the definition of Hardy spaces and the space $BMO$ on $\R^n$.
Let $\phi \in \Sh(\R^n)$ be such that $\int_{\R^n} \phi(x)\, dx =1$.
For $0 < p \le \infty$, the Hardy space $H^p(\R^n)$ consists of all $f \in \Sh^\prime(\R^n)$ such that
\begin{equation*}
\|f\|_{H^p}
=
\|
\sup_{t > 0}
|\phi_t * f|
\|_{L^p}
< 
\infty,
\end{equation*}
where $\phi_t(x) = t^{-n} \phi(t^{-1} x)$. 
It is known that
the definition of $H^p (\R^n)$ is independent of the choice of the function $\phi$,
and that
$H^p(\R^n) = L^p(\R^n)$ for $1 < p \le \infty$. 
The space $BMO(\R^n)$ consists of all locally integrable functions $f$ on $\R^n$
such that
\begin{equation*}
\|f\|_{BMO}
=
\sup_Q
\frac{1}{|Q|}
\int_{Q}
|f(x) -f_Q|
\, dx
< \infty, 
\end{equation*}
where the supremum is taken over all cubes in $\R^n$ 
and 
$f_Q = |Q|^{-1} \int_Q f$.
It is known that the dual space of $H^1(\R^n)$ is $BMO(\R^n)$.


\section{Proof of Theorem \ref{mainthm}}\label{mainsec}
In this section, we shall prove Theorem \ref{mainthm}.
For $0< \rho < 1$ and $1\le p_1, p_2 \le \infty$, we set 
\begin{align*}
\widetilde{m}_\rho(p_1, p_2)
&=
 (1-\rho)
\widetilde{m}_0(p_1, p_2),
\\
\widetilde{m}_0(p_1, p_2)
&=
-n
\left(
\max
\left\{
\frac{1}{2}, 
\frac{1}{p_1},
\frac{1}{p_2},
\frac{1}{p_1}+ \frac{1}{p_2} -\frac{1}{2}
\right\}
\right)
\\
&=
\begin{cases}
-n/2 \quad & \text{if}\ (1/p_1, 1/p_2) \in I_1;
\\
-n/p_2 \quad & \text{if}\ (1/p_1, 1/p_2) \in I_2;
\\
-n/p_1 \quad & \text{if}\ (1/p_1, 1/p_2) \in I_3;
\\
n/2 -n/p_1 -n/p_2 \quad & \text{if}\ (1/p_1, 1/p_2) \in I_4, 
\end{cases}
\end{align*} 
where $I_1, \dots, I_4$ are the following ranges;

\begin{center}
\begin{tikzpicture}[scale=1.0]
\draw[very thick, black][->] (0, 0) -- (4.6, 0) node [right] {$1/p_1$};
\draw[very thick, black][->] (0, 0) -- (0, 4.6) node [above] {$1/p_2$};
\draw (0, 0)  node [below] {\scriptsize$0$};
\draw (2, 0)  node [below] {\tiny$1/2$};
\draw (0, 2)  node [left] {\tiny$1/2$};
\draw (4, 0)  node [below] {\tiny$1$};
\draw (0, 4)  node [left] {\tiny$1$};
\draw (1.0, 1.3)  node [below] {$I_1$};
\draw (1.0, 3.3)  node [below] {$I_2$};
\draw (3.0, 1.3)  node [below] {$I_3$};
\draw (3.0, 3.3)  node [below] {$I_4$};
\draw[very thick, black][-] (2, 0) -- (2, 4);
\draw[very thick, black][-] (0, 2) -- (4, 2);
\draw[very thick, black][-] (0, 4) -- (4, 4);
\draw[very thick, black][-] (4, 4) -- (4, 0);
\end{tikzpicture}
\end{center}

We now introduce the following fact proved in Wainger \cite[Theorem 10]{Wainger}. 
Let $0< a < 1$ and $0< b < n$.
For $t > 0$, we define
\begin{equation*}
\widetilde{f}_{a, b, t}(x)
=
\sum_{k \in \Z^n \setminus \{0\}}
e^{-t|k|} |k|^{-b} e^{i |k|^a} e^{i k \cdot x}.
\end{equation*}
Then, the limit $\widetilde{f}_{a, b}(x) = \lim_{t \to +0} \widetilde{f}_{a, b, t}(x)$ 
exists for all $x \in\R^n \setminus (2\pi\Z)^n$.
Furthermore, 
if $1 \le p \le \infty$ and $b > n/2 -an/2-n/p+an/p$, then $\widetilde{f}_{a, b} \in L^p(\T^n)$. 

From this, the following fact was given. 
\begin{lem}[{\cite[Lemma 6.1]{MT-IUMJ}}]\label{Wainger-example}
Let $ 0 < a < 1$, $0< b < n$ and $\vphi \in \Sh(\R^n)$.
For $t > 0$, we set
\begin{equation*}
f_{a, b, t}(x)
=
\sum_{k \in \Z^n \setminus \{0\}}
e^{-t|k|} |k|^{-b} e^{i |k|^a} e^{i k \cdot x} \vphi(x).
\end{equation*}
If $1 \le p \le \infty$ and 
$b > n - na/2 - n/p + na/p$,
then $\sup_{t > 0} \|f_{a, b, t}\|_{L^p(\R^n)} < \infty$.
\end{lem}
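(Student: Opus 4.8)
The plan is to prove the estimate by transference from the torus $\T^n$: I reduce the uniform-in-$t$ bound for $f_{a,b,t}$ on $\R^n$ to a uniform bound for its periodic part on $\T^n$, and then feed in Wainger's Theorem 10. First I would record the factorization $f_{a,b,t}=\widetilde f_{a,b,t}\cdot\vphi$, where $\widetilde f_{a,b,t}$ is the $2\pi$-periodic function from Wainger's construction. Since $\widetilde f_{a,b,t}$ is periodic and $\vphi\in\Sh(\R^n)$ decays rapidly, I decompose $\R^n$ into the translated fundamental domains $2\pi\nu+[0,2\pi)^n$ and use $|\vphi(y+2\pi\nu)|\lesssim(1+|\nu|)^{-N}$ with $Np>n$ to obtain, for $1\le p<\infty$,
\[
\|f_{a,b,t}\|_{L^p(\R^n)}^p
=\sum_{\nu\in\Z^n}\int_{[0,2\pi)^n}|\widetilde f_{a,b,t}(y)|^p\,|\vphi(y+2\pi\nu)|^p\,dy
\lesssim\|\widetilde f_{a,b,t}\|_{L^p(\T^n)}^p,
\]
with an implicit constant independent of $t$ (the case $p=\infty$ being the trivial bound by $\|\widetilde f_{a,b,t}\|_{L^\infty(\T^n)}\|\vphi\|_{L^\infty}$). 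This reduces matters to showing $\sup_{t>0}\|\widetilde f_{a,b,t}\|_{L^p(\T^n)}<\infty$.

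For the torus estimate I would exploit the Poisson-mean structure in the parameter $t$. The Fourier coefficients of $\widetilde f_{a,b,t}$ are $e^{-t|k|}|k|^{-b}e^{i|k|^a}$, so $\widetilde f_{a,b,t}=P_tG$, where $G$ is the object on $\T^n$ with coefficients $|k|^{-b}e^{i|k|^a}$ and $P_t$ is the multiplier $k\mapsto e^{-t|k|}$, i.e. convolution against the periodization of the $\R^n$ Poisson kernel. That kernel is nonnegative and of total mass one, so $P_t$ is a contraction on every $L^p(\T^n)$, $1\le p\le\infty$, uniformly in $t$; hence $\sup_{t>0}\|\widetilde f_{a,b,t}\|_{L^p(\T^n)}\le\|G\|_{L^p(\T^n)}$ once $G\in L^p(\T^n)$. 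This is exactly where Wainger's Theorem 10 enters: its hypothesis $b>n/2-na/2-n/p+na/p$ is implied by our stronger hypothesis $b>n-na/2-n/p+na/p$ (the two thresholds differing by $n/2$), so it supplies an a.e. limit function $\widetilde f_{a,b}\in L^p(\T^n)$, which I will identify with $G$.

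The step I expect to be the main obstacle is precisely this identification $G=\widetilde f_{a,b}$, i.e. matching the object $G$ defined through its Fourier coefficients with the a.e. pointwise limit that Wainger's theorem produces; only after this does the Poisson contraction upgrade Wainger's limit-membership into the uniform-in-$t$ family bound we need. Since $0<b<n$ the coefficients $|k|^{-b}e^{i|k|^a}$ are bounded, so they define $G\in\Sh'(\T^n)$ with $\widetilde f_{a,b,t}=P_tG$ and $P_tG\to G$ in $\Sh'(\T^n)$ as $t\to+0$, while Wainger gives $\widetilde f_{a,b}=\lim_{t\to+0}\widetilde f_{a,b,t}$ pointwise. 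Matching the distributional limit with the pointwise one — via the standard Abel-summability identification, once a $t$-uniform integrable majorant for $|\widetilde f_{a,b,t}|$ is in hand — yields $G=\widetilde f_{a,b}\in L^p(\T^n)$, after which the contraction and transference estimates above combine to finish the proof. I expect the transference bound and the contraction to be routine, and this reconciliation of the two notions of limit to be the only delicate point.
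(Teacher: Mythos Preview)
The paper does not prove this lemma: it is quoted as \cite[Lemma~6.1]{MT-IUMJ}, preceded only by a summary of Wainger's Theorem~10 from which it is derived there. So there is no argument in the present paper to compare yours against.

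On your outline itself: the transference step (tiling $\R^n$ by translates of $[0,2\pi)^n$ and using the rapid decay of $\vphi$) and the Poisson-contraction step are both correct and standard. The issue is exactly where you place it, the identification $G=\widetilde f_{a,b}$. You propose to obtain it ``once a $t$-uniform integrable majorant for $|\widetilde f_{a,b,t}|$ is in hand,'' but you never say where that majorant comes from, and it does \emph{not} follow from the bare conclusion $\widetilde f_{a,b}\in L^p(\T^n)$ that the paper quotes from Wainger. Pointwise a.e.\ convergence of Abel means $P_tG$ to an $L^1$ function does not by itself force the distribution $G$ to equal that function (take $G$ a singular measure: its Abel means tend to $0$ a.e., yet $G\neq 0$). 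What actually supplies the missing majorant is Wainger's own analysis: his Theorem~10 is proved via pointwise asymptotics that dominate $\sup_{t>0}|\widetilde f_{a,b,t}(x)|$ by an explicit power of the distance to $(2\pi\Z)^n$, and the $L^p$ membership of the limit is itself deduced from that bound. Once you invoke those pointwise estimates, the identification goes through; but then, with an $L^p$ majorant already in hand, the uniform bound $\sup_{t>0}\|\widetilde f_{a,b,t}\|_{L^p(\T^n)}<\infty$ is immediate and your Poisson-contraction detour becomes superfluous. In short: the plan is sound, but the crux you flagged requires an explicit appeal to Wainger's pointwise estimates, not merely his $L^p$ conclusion as stated here.
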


The following lemma plays an essential role in the proof of Theorem \ref{mainthm}

\begin{lem} \label{keylem}
Let $0 < \rho < 1$, $1 < p_1, p_2 < \infty$ and $ 0 < p < \infty$.
If all bilinear pseudo-differential operators with symbols in $BS^{\widetilde{m}_{\rho}(p_1, p_2)}_{\rho, \rho}$
 are bounded from $L^{p_1}(\R^n) \times L^{p_2}(\R^n)$ to $L^p(\R^n)$,
then $1/p = 1/p_1 + 1/p_2$.
\end{lem}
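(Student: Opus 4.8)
The plan is to argue by contradiction: assume the boundedness $L^{p_1} \times L^{p_2} \to L^p$ holds but $1/p \neq 1/p_1 + 1/p_2$, and derive a contradiction by testing the operators against a carefully chosen pair of inputs together with a dilation argument. The key mechanism is a scaling heuristic: the symbol class $BS^{m}_{\rho,\rho}$ is not dilation-invariant, but by inserting a bump localized at frequency of size $\lambda$ one can manufacture symbols whose relevant constants are uniform in $\lambda$, and then the three $L^p$ norms involved scale with three different powers of $\lambda$. If $1/p < 1/p_1 + 1/p_2$ or $1/p > 1/p_1 + 1/p_2$, one of these scaling relations forces the operator norm to blow up or forces a degenerate estimate, contradicting the assumed uniform bound. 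Concretely, I would first fix a smooth bump $\theta$ supported in an annulus $|\xi_1|,|\xi_2| \approx 1$, set $\sigma_\lambda(x,\xi_1,\xi_2) = \lambda^{\widetilde m_\rho(p_1,p_2)} \theta(\xi_1/\lambda)\theta(\xi_2/\lambda) \cdot (\text{modulation/oscillation factor})$, and check directly from the $BS^{m}_{\rho,\rho}$ seminorm definition that $\{\sigma_\lambda\}_{\lambda \ge 1}$ lies in a bounded subset of $BS^{\widetilde m_\rho(p_1,p_2)}_{\rho,\rho}$ — here the exotic exponent $\rho$ on the $x$-derivatives is what permits the $x$-dependent oscillation needed to make the example sharp.

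The second step is to choose the test functions. For inputs I would take $f_j$ to be (dilates of) the Wainger-type functions $f_{a,b,t}$ from Lemma \ref{Wainger-example}, or rather their frequency-localized pieces at scale $\lambda$, with the parameters $a$ and $b$ tuned to the critical line appearing in that lemma, namely $b$ slightly above $n - na/2 - n/p_j + na/p_j$, and with $a$ related to $\rho$ (the natural correspondence being $a = \rho$ or $a$ close to $1-\rho$, determined by matching the Wainger exponent with $\widetilde m_0(p_1,p_2)$ in the respective region $I_1,\dots,I_4$). Lemma \ref{Wainger-example} then gives $\sup_{t>0}\|f_{a,b,t}\|_{L^{p_j}} < \infty$ while, on the output side, the lacunary-type Fourier series structure forces $T_{\sigma_\lambda}(f_1,f_2)$ to have an $L^p$ norm that is bounded below by a positive constant independent of the scale — essentially because the oscillatory symbol is designed to "undo" the phases $e^{i|k|^a}$ and produce a function comparable to $\varphi$ times a genuinely divergent series in the borderline case. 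Combining the uniform input bound, the uniform symbol bound, and the lower bound on the output against the scaling of all three norms in $\lambda$ yields the inequality $1/p = 1/p_1 + 1/p_2$; any strict inequality produces a factor $\lambda^{c}$ with $c \neq 0$ on one side that cannot be absorbed.

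I expect the main obstacle to be making the output lower bound rigorous and uniform: one must verify that the specific symbol $\sigma_\lambda$ acting on the specific pair $(f_1,f_2)$ does not produce cancellation that kills the $L^p$ norm, i.e. that the model operator genuinely reproduces the divergent Wainger behavior rather than smoothing it away. This is delicate because $T_{\sigma_\lambda}$ is bilinear, so the two oscillatory series interact, and one needs the product phase $e^{i(|k_1|^a + |k_2|^a)}$ (or a single effective phase after the symbol's oscillation is accounted for) to remain of the Wainger type after multiplication — this likely requires restricting attention to a diagonal or near-diagonal portion $k_1 \approx k_2$ of the double sum and estimating the off-diagonal contribution as an error. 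A secondary technical point is handling the endpoint cases $p_j \in \{1,\infty\}$ at the boundaries of the regions $I_1,\dots,I_4$; the lemma as stated already restricts to $1 < p_1,p_2 < \infty$ and $0<p<\infty$, which sidesteps the worst endpoints, so the region analysis should reduce to four cases ($I_1$ through $I_4$) each handled by the same scaling scheme with the appropriate value of $\widetilde m_0(p_1,p_2)$. Once Lemma \ref{keylem} is established in this form, Theorem \ref{mainthm} follows by a separate (easier) reduction from $H^{p_1}\times H^{p_2}\to L^p$ boundedness to the $L^{p_1}\times L^{p_2}\to L^p$ statement, using $H^{p_j}=L^{p_j}$ for $p_j>1$ and a lifting/duality argument to cover the remaining ranges of $p_1,p_2,p$.
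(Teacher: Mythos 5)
Your scheme captures the paper's mechanism for only one of the two inequalities. Testing the assumed boundedness on a family of Wainger-type inputs against an adapted oscillating symbol can only give a \emph{lower} bound on the operator norm, hence a one-sided relation between the exponents: after the power counting it yields $1/p \ge 1/p_1 + 1/p_2$, and if instead $1/p > 1/p_1 + 1/p_2$ the resulting inequality is simply satisfied, with no blow-up and no contradiction. So your claim that ``any strict inequality produces a factor $\lambda^c$ that cannot be absorbed'' is not correct, and your proposal contains no argument for the converse direction $1/p \le 1/p_1 + 1/p_2$. The paper obtains that half separately and cheaply: any nonzero $x$-independent $\sigma \in \Sh(\R^{2n})$ belongs to $BS^{\widetilde m}_{\rho,\rho}$ for every $\widetilde m \le 0$, and a nonzero bilinear Fourier multiplier bounded from $L^{p_1}\times L^{p_2}$ to $L^p$ with $p<\infty$ forces $1/p \le 1/p_1 + 1/p_2$ (Grafakos, Proposition 7.3.7, a translation argument). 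You need something of this kind in addition to your scaling family.

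For the half your scheme does address, the decisive step --- preventing cancellation in the output --- is exactly what you leave open, and your proposed fix (keep a near-diagonal portion of the $(k_1,k_2)$ sum and treat the rest as an error) is not a workable substitute: the power counting needs the full cardinality $\approx 2^{j(1-\rho)n}$ of each fiber $\{k_1 : k_1 + k_2 = k\}$, and discarding off-diagonal terms loses precisely that factor, while keeping them leaves the feared cancellation unaddressed. The paper resolves this by building the correction into the symbol itself: the bump coefficients are taken as $c_{k_1,k_2} = r_{k_1+k_2}(\omega)\, e^{-i|k_1|^{a_1}} e^{-i|k_2|^{a_2}}$, so the conjugate phases make each fiber sum $d_k$ a sum of positive terms, and the Rademacher signs allow an average in $\omega$ and Khintchine's inequality to give the lower bound $(\sum_k |d_k|^2)^{1/2}$ uniformly. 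Two further points you gloss over: the hypothesis only gives boundedness of each individual $T_\sigma$, so the ``uniform bound'' you invoke over the family (in $j$ and in $\omega$) must first be produced via the closed graph theorem, which shows the operator norm is controlled by finitely many symbol seminorms; and the Wainger parameters $a_1, a_2$ are not tied to $\rho$ --- they are free in $(0,1)$ and are sent to $0$ or $1$ according to which region $I_1,\dots,I_4$ contains $(1/p_1,1/p_2)$, with $b_i$ just above the Wainger threshold, so that the $(1-\rho)$-part of the exponent tends to $0$ and the hypothesis $\rho>0$ then forces $1/p \ge 1/p_1 + 1/p_2$.
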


\begin{proof}
In this proof, we simply write
$\widetilde{m}_\rho(p_1, p_2)$ 
and
$\widetilde{m}_0(p_1, p_2)$
as
$m_\rho$ and $m_0$, respectively.

We first show that $1/p \le 1/p_1 + 1/p_2$.
If the symbol $\sigma$ is independent of $x$, namely, $\sigma =\sigma(\xi_1, \xi_2)$, 
then $T_\sigma$ is called a bilinear Fourier multiplier operator. 
We will use the following fact proved in \cite[Proposition 7.3.7]{Grafakos-Modern}:
If a nonzero bilinear Fourier multiplier operator is bounded from $L^{p_1} \times L^{p_2}$ to $L^p$, 
$0 < p_1, p_2, p < \infty$, then $1/p \le 1/p_1 + 1/p_2$.
Now, let $\sigma = \sigma(\xi_1, \xi_2) \in \Sh(\R^{2n})$.
Then, since $\sigma \in  BS^{\widetilde{m}}_{\rho, \rho}$
for any $\widetilde{m} \le 0$,
it follows from our assumption and the above fact that
$1/p \le 1/p_1 + 1/p_2$.

Next, we prove $1/p \ge 1/p_1 + 1/p_2$. 
The proof below is based on the idea given by \cite[Proof of Lemma 6.3]{MT-IUMJ}. 
We first give a rough argument omitting necessary limiting argument 
and then we incorporate necessary details at the last part of the proof.
By our assumption and the closed graph theorem, there exists a positive integer $M$
such that
\begin{multline*}
\|T_\sigma\|_{L^{p_1} \times L^{p_2} \to L^{p}}
\\
\lesssim
\max_{|\alpha|, |\beta_1|, |\beta_2| \le M}
\|
(1+|\xi_1|+|\xi_2|)^{-m_\rho-\rho(|\alpha| -|\beta_1|-|\beta_2|)}
\pa_x^{\alpha}
\pa_{\xi_1}^{\beta_1}
\pa_{\xi_2}^{\beta_2}
\sigma(x, \xi_1, \xi_2)
\|_{L^\infty}.
\end{multline*}
for all $\sigma \in BS^{m_\rho}_{\rho, \rho}$ (see \cite[Lemma 2.6]{BBMNT}). 

Let $\varphi, \widetilde{\varphi} \in \Sh(\R^n)$ be such that
\begin{align*}
&\supp \varphi \subset [-1/4, 1/4]^n,
\quad
|\F^{-1}\varphi(x)| \ge 1 \ \text{on} \ [-1, 1]^n,
\\
&\supp \widetilde{\varphi} \subset [-1/2, 1/2]^n,
\quad
\widetilde{\varphi} = 1 \ \text{on} \ [-1/4, 1/4]^n.
\end{align*}
For $\epsilon > 0$ and $0< a_1, a_2 < 1$, we set
\begin{align*}
b_i = n -\frac{na_i}{2} -\frac{n}{p_i} + \frac{na_i}{p_i} + \epsilon,
\quad
i = 1, 2.
\end{align*}
For sufficiently large $j \in \N$, 
we define 
\begin{align*}
&\sigma(\xi_1, \xi_2)
=
\sum_{(k_1, k_2) \in D_j}
c_{k_1, k_2} (1+ |k_1| + |k_2|)^{m_0} 
\varphi(2^{-j\rho}\xi_1 -k_1)
\varphi(2^{-j\rho}\xi_2 -k_2),
\\
&\widehat{f_{a_k, b_k}}(\xi_k)
=
\sum_{\ell_k \in \Z^n \setminus \{0\}} 
|\ell_k|^{-b_k} e^{i |\ell_k|^{a_k}} 
\widetilde{\varphi}(2^{-j\rho}\xi_k - \ell_k),
\quad
k = 1, 2, 
\end{align*}
where
$\{c_{k_1, k_2}\}$ is a sequence satisfying $\sup_{k_1, k_2 \in \Z^n} |c_{k_1, k_2}| \le 1$,
and 
\begin{equation*}
D_j
=
\left\{
(k_1, k_2) \in \Z^n \times \Z^n
\ :\ 
|k_1| \approx |k_2| \approx |k_1+ k_2| \approx 2^{j(1-\rho)} 
\right\}.
\end{equation*}
Then, since $|\xi_1|, |\xi_2| \approx 2^j$ for $(\xi_1, \xi_2) \in \supp \sigma$,
we have
\begin{equation*}
|\pa_{\xi_1}^{\beta_1}\pa_{\xi_2}^{\beta_2} \sigma(\xi_1, \xi_2)|
\lesssim
2^{j(1-\rho)m_0} 2^{-j\rho(|\beta_1| +|\beta_2|)}
\approx
(1 + |\xi_1| + |\xi_2|)^{m_\rho -\rho(|\beta_1| + |\beta_2|)},
\end{equation*}
and thus, this $\sigma$ belongs to $BS_{\rho, \rho}^{m_\rho}$.
Here, it should be emphasized that the implicit constants are independent of $\{c_{k_1, k_2}\}$. 
It follows from Lemma \ref{Wainger-example} that 
\begin{align*}
\|f_{a_1, b_1}\|_{L^{p_1}}
&=
\Big\|
\sum_{\ell_1 \in \Z^n \setminus \{0\}} 
|\ell_1|^{-b_1} e^{i |\ell_1|^{a_1}} e^{i 2^{j\rho} x \cdot \ell_1}
2^{j\rho n} \widetilde{\Phi}(2^{j\rho} x)
\Big\|_{L^{p_1}_x}
\\
&=
2^{j\rho n(1 - 1/p_1)}
\Big\|
\sum_{\ell_1 \in \Z^n \setminus \{0\}} 
|\ell_1|^{-b_1} e^{i |\ell_1|^{a_1}} e^{i x \cdot \ell_1}
\widetilde{\Phi}(x)
\Big\|_{L^{p_1}_x}
\approx
2^{j\rho n(1 - 1/p_1)},
\end{align*}
where $\widetilde{\Phi} = \F^{-1} \widetilde{\varphi}$. 
To have the equivalence above, we need a slight modification for the definition of $f_{a_1, b_1}$ 
(see the last part of this proof). 
Similarly, $\|f_{a_2, b_2}\|_{L^{p_2}} \approx 2^{j \rho n (1 -1/p_2)}$.

On the other hand, 
since $\varphi(\cdot -k) \widetilde{\varphi}(\cdot -\ell) = \varphi(\cdot -k)$ if $k = \ell$, and 0 otherwise,  
we have
\begin{align*}
T_\sigma(f_{a_1, b_1}, f_{a_2, b_2})(x)
&=
\sum_{(k_1, k_2) \in D_j}
c_{k_1, k_2}
(1+ |k_1| + |k_2|)^{m_0}
|k_1|^{-b_1} |k_2|^{-b_2}
\\
&\qquad\qquad\qquad\qquad
\times
e^{i|k_1|^{a_1}}
e^{i|k_2|^{a_2}}
e^{i 2^{j\rho}x \cdot (k_1+k_2)}
\{
2^{j\rho n}
\Phi(2^{j \rho} x)
\}^2
\end{align*}
with $\Phi = \F^{-1}\varphi$.
Now, let $\{r_k(\omega)\}_{k \in \Z^n}$ be a sequence of the Rademacher functions on $\omega \in [0, 1]^n$. 
We choose $\{c_{k_1, k_2}\}$ as  
$c_{k_1, k_2} = r_{k_1+k_2}(\omega) e^{-i|k_1|^{a_1}}e^{-i|k_2|^{a_2}}$. 
Then, 
\begin{align*}
&T_\sigma(f_{a_1, b_1}, f_{a_2, b_2})(x)
\\
&=
\sum_{(k_1, k_2) \in D_j}
r_{k_1+k_2}(\omega)
(1+ |k_1| + |k_2|)^{m_0}
|k_1|^{-b_1} |k_2|^{-b_2}
e^{i 2^{j\rho}x \cdot (k_1+k_2)}
\{
2^{j\rho n}
\Phi(2^{j \rho} x)
\}^2
\\
&=
\{
2^{j\rho n}
\Phi(2^{j \rho} x)
\}^2
\sum_{|k| \approx 2^{j(1-\rho)}} 
r_k(\omega) e^{i 2^{j\rho}x \cdot k} d_k,
\end{align*}
where 
\begin{align*}
d_k
=
\sum_{k_1 : |k_1| \approx |k-k_1| \approx 2^{j(1-\rho)}}
(1+ |k_1| + |k-k_1|)^{m_0}
|k_1|^{-b_1} |k-k_1|^{-b_2}.
\end{align*}
Here, we see from the way to generate $d_k$ that the cardinality of the set 
$\{k_1 \in \Z^n : |k_1| \approx |k-k_1| \approx 2^{j(1-\rho)}\}$
for $|k| \approx 2^{j(1-\rho)}$ is equivalent to $2^{j(1-\rho) n}$.
By a change of variables 
and 
the assumption $|\Phi| \ge 1$ on $[-1, 1]^n$, 
we have
\begin{align*}
	\|T_\sigma(f_{a_1, b_1}, f_{a_2, b_2})\|_{L^p}
&=
2^{j\rho n(2 -1/p)}
\Big\|
\{
\Phi(x)
\}^2
\sum_{|k| \approx 2^{j(1-\rho)}} 
r_k(\omega) e^{ix \cdot k} d_k
\Big\|_{L^p_x}
\\
&\ge
2^{j\rho n(2 -1/p)}
\Big\|
\sum_{|k| \approx 2^{j(1-\rho)}} 
r_k(\omega) e^{ix \cdot k} d_k
\Big\|_{L^p_x ([-1, 1]^n)}.
\end{align*}
Thus, our assumption implies that
\begin{align} \label{Estimate-are}
2^{j \rho n (1/p -1/p_1-1/p_2)}
\gtrsim
\Big\|
\sum_{|k| \approx 2^{j(1-\rho)}} 
r_k(\omega) e^{ix \cdot k} d_k
\Big\|_{L^p_x([-1, 1]^n)}.
\end{align}
It should be emphasized that the implicit constant in this inequality
does not depend on $\omega \in [0, 1]^n$.
Raising \eqref{Estimate-are} to a $p$th-power and integrating over $\omega$, 
we obtain
\begin{align*}
(2^{j \rho n (1/p -1/p_1-1/p_2)})^p
\gtrsim
\int_{[-1, 1]^n}
\int_{[0, 1]^n}
\Big|
\sum_{|k| \approx 2^{j(1-\rho)}} 
r_k(\omega) e^{ix \cdot k} d_k
\Big|^p
d\omega
dx.
\end{align*}
By Khintchine's inequality (see, e.g., \cite[Appendix C]{Grafakos-Classical}), the right hand side is equivalent to
\begin{align*}
\int_{[-1, 1]^n}
\Big(
\sum_{|k| \approx 2^{j(1-\rho)}} 
|e^{ix \cdot k} d_k|^2
\Big)^{p/2}
dx
\approx
\Big(
\sum_{|k| \approx 2^{j(1-\rho)}} 
|d_k|^2
\Big)^{p/2}.
\end{align*}
Since 
\begin{align*}
|d_k|
&\approx
2^{j(1-\rho)(m_0-b_1-b_2)}
\Big(
\sum_{k_1 : |k_1| \approx |k-k_1| \approx 2^{j(1-\rho)}}
1
\Big)
\approx
2^{j(1-\rho)(m_0-b_1-b_2 + n)}
\end{align*}
for $|k| \approx 2^{j(1-\rho)}$, we obtain
\begin{align*}
\Big(
\sum_{|k| \approx 2^{j(1-\rho)}} 
|d_k|^2
\Big)^{p/2}
&\approx
2^{j(1-\rho)(m_0-b_1-b_2 + n) p}
\Big(
\sum_{|k| \approx 2^{j(1-\rho)}} 
1
\Big)^{p/2}
\approx
2^{j(1-\rho)(m_0-b_1-b_2 + 3n/2)p}.
\end{align*}
Combining the above estimates, we have
\begin{align*}
2^{j \rho n (1/p -1/p_1-1/p_2)}
\gtrsim
2^{j(1-\rho)(m_0-b_1-b_2 + 3n/2)}, 
\end{align*}
and since $j$ is arbitrarily large, this implies that
\begin{align} \label{Estimate-exponent}
\begin{split}
&\rho n 
\left(
\frac{1}{p} -\frac{1}{p_1} - \frac{1}{p_2}
\right)
\\
&\ge
(1-\rho)
\left\{
m_0
+
\left(
\frac{n}{p_1} + \frac{n}{p_2} - \frac{n}{2}
\right)
+
a_1
\left(
\frac{n}{2} - \frac{n}{p_1}
\right)
+
a_2
\left(
\frac{n}{2} - \frac{n}{p_2}
\right)
- 2\epsilon
\right\}.
\end{split}
\end{align}
As mentioned at the beginning of this section, we have
\begin{align*}
m_0
=
\begin{cases}
-n/2 \quad & \text{if}\ (1/p_1, 1/p_2) \in I_1;
\\
-n/p_2 \quad & \text{if}\ (1/p_1, 1/p_2) \in I_2;
\\
-n/p_1 \quad & \text{if}\ (1/p_1, 1/p_2) \in I_3;
\\
n/2 -n/p_1 - n/p_2  \quad & \text{if}\ (1/p_1, 1/p_2) \in I_4,
\end{cases}
\end{align*}
and hence, we see that 
the right hand side of \eqref{Estimate-exponent} converges to 0 as $\epsilon \to 0$ and 
\begin{align*}
\begin{cases}
a_1 \to 1, a_2 \to 1 \quad & \text{if}\ (1/p_1, 1/p_2) \in I_1;
\\
a_1 \to 1, a_2 \to 0 \quad & \text{if}\ (1/p_1, 1/p_2) \in I_2;
\\
a_1 \to 0, a_2 \to 1 \quad & \text{if}\ (1/p_1, 1/p_2) \in I_3;
\\
a_1 \to 0, a_2 \to 0 \quad & \text{if}\ (1/p_1, 1/p_2) \in I_4.
\end{cases}
\end{align*}
Thus, since $\rho > 0$, we obtain $1/p- 1/p_1 -1/p_2 \ge 0$, 
that is, $1/p \ge 1/p_1 + 1/p_2$.

We notice that the above argument is not rigorous 
since $f_{a_1, b_1}$ and $f_{a_2, b_2}$ are not in $\Sh$.
In order to correct this point, 
we replace $f_{a_k, b_k}$ by $f_{a_k, b_k, t}$ satisfying
\begin{align*}
\widehat{f_{a_k, b_k, t}}(\xi_k)
=
\sum_{\ell_k \in \Z^n \setminus \{0\}}
e^{-t|\ell_k|} |\ell_k|^{-b_k} e^{i |\ell_k|^{a_k}}  
\widetilde{\vphi}(2^{-j\rho}\xi_k - \ell_k),
\quad
k = 1, 2,
\end{align*}
where $a_k$ and $b_k$ are the same as above.
Then, $f_{a_k, b_k, t} \in \Sh$, 
and Lemma \ref{Wainger-example} implies that 
$\sup_{t > 0} \|f_{a_k, b_k, t}\|_{L^{p_k}} \approx 2^{j \rho n (1-1/p_k)}$, 
$k= 1, 2$. 
In the same way as above, we have
\begin{align*}
T_{\sigma}(f_{a_1, b_1, t}, f_{a_2, b_2, t})(x)
=
\{
2^{j\rho n}
\Phi(2^{j \rho} x)
\}^2
\sum_{|k| \approx 2^{j(1-\rho)}} 
r_k(\omega) e^{i 2^{j\rho}x \cdot k} d_{k, t}
\end{align*}
with
\begin{align*}
d_{k, t}
=
\sum_{k_1 : |k_1| \approx |k-k_1| \approx 2^{j(1-\rho)}}
(1+ |k_1| + |k-k_1|)^{m_0}
e^{-t(|k_1| + |k - k_1|)}
|k_1|^{-b_1} |k-k_1|^{-b_2}.
\end{align*}
Since this is the sum of a finite number of terms, 
we have $d_{k, t} \to d_k$ as $t \to 0$.
The rest of the proof is the same as above.
\end{proof}


In the rest of this section,  
we will complete the proof of Theorem \ref{mainthm}.
Let $J_0, \dots, J_4$ be the same region as in Section \ref{Intro}. 
The following proof is also based on the idea by
\cite[Theorem 6.4]{MT-IUMJ}.

First, we assume $p<\infty$.
We consider the case $(1/p_1, 1/p_2) \in (0, 1)^2$.
Since $H^p = L^p$, $1 < p \le \infty$,
the desired results for 
$(1/p_1, 1/p_2) \in \bigcup_{k=1}^4 J_k \cap (0, 1)^2$ 
follow from Lemma \ref{keylem}.
In the case $(1/p_1, 1/p_2) \in J_0 \cap (0, 1)^2$, 
we will use a duality argument. 
It should be remarked that  
$p$ is restricted to $2 \le p < \infty$ in this case,
since $1/p \le 1/p_1 + 1/p_2 \le 1/2$ for $(1/p_1, 1/p_2) \in J_0 \cap (0, 1)^2$
(for the first inequality, 
see the argument in the proof of Lemma \ref{keylem}).  
It is known that
$T_{\sigma}$ is bounded from $L^{p_1} \times L^{p_2}$ to $L^p$
if and only if
$T_{\sigma^{\ast1}}$ is bounded from $L^{p'} \times L^{p_2}$ to $L^{p_1'}$, 
where $\sigma^{\ast1}$ is given by
\[
\int_{\R^n} T_{\sigma} (f_1,f_2)(x) g(x) \,dx 
= \int_{\R^n} T_{\sigma^{\ast1}} (g,f_2)(x) f_1(x) \,dx .
\]
It is further known that if $\sigma$ is in $BS_{\rho,\rho}^{m}$,
then the transposition $\sigma^{\ast1}$ is also in $BS_{\rho,\rho}^{m}$
(see \cite[Theorem 2.1]{BMNT}).
Thus, it follows from these facts that
if all the operators in $\Op(BS_{\rho,\rho}^{m})$ are
bounded from $L^{p_1} \times L^{p_2}$ to $L^p$,
then all the operators in $\Op(BS_{\rho,\rho}^{m})$ are
bounded from $L^{p'} \times L^{p_2}$ to $L^{p_1'}$.
Now, since we have $BS_{\rho, \rho}^{-(1-\rho)n/p^\prime}
\subset BS_{\rho, \rho}^{-(1-\rho)n(1-1/p_1-1/p_2)}$
by recalling $1/p \le 1/p_1 + 1/p_2$,
our assumption and the above argument yield that
all $T_\sigma$ with $\sigma \in BS_{\rho, \rho}^{-(1-\rho)n/p^\prime}$ are 
bounded from $L^{p^\prime} \times L^{p_2}$ to $L^{p_1^\prime}$.
Since $(1/p^\prime, 1/p_2) \in J_3 \cap (0, 1)^2$ and $1 < p_1' < 2$, by Lemma  \ref{keylem}, 
we have $1/p_1^\prime = 1/p^\prime + 1/p_2$, that is, 
$1/p = 1/p_1 + 1/p_2$.

For the case $(1/p_1, 1/p_2) \in [0, \infty)^2 \setminus (0, 1)^2$,
we first consider the case 
$(1/p_1, 1/p_2) \in J_2 \setminus (0, 1)^2$.
The same argument works for the other cases 
$J_k \cap (0, 1)^2$, $k=1,3,4$.
Suppose that all bilinear pseudo-differential operators 
with symbols in $BS^{-(1-\rho)n/p_2}_{\rho, \rho}$ are bounded from $H^{p_1} \times H^{p_2}$ to $L^p$.
Since 
\begin{equation}\label{L2L2L1}
\Op(BS^{-(1-\rho)n/2}_{\rho, \rho}) 
\subset
B( L^2 \times L^2 \to L^1 )
\end{equation}
(see \cite{MT-I}),  
by interpolation, 
all $T_\sigma$ with $\sigma \in BS^{-(1-\rho)n/\widetilde{p_2}}_{\rho, \rho}$ 
are bounded from $H^{\widetilde{p}_1} \times H^{\widetilde{p}_2}$ to $L^{\widetilde{p}}$ 
with $(1/\widetilde{p}_1, 1/\widetilde{p}_2, 1/\widetilde{p})
=
(1-\theta) (1/2, 1/2, 1) + \theta (1/p_1, 1/p_2, 1/p)$ 
and $0< \theta < 1$. 
Taking $\theta$ sufficiently close to 0, 
we have $(1/\widetilde{p}_1, 1/\widetilde{p}_2) \in J_2 \cap (0, 1)^2$. 
By Lemma \ref{keylem}, 
we have
\begin{equation*}
0 
=
\frac{1}{\widetilde{p}} - \frac{1}{\widetilde{p}_1} -\frac{1}{\widetilde{p}_2}
=
\theta
\left(
\frac{1}{p} - \frac{1}{p_1} -\frac{1}{p_2}
\right),
\end{equation*}
and this implies $1/p  = 1/p_1 + 1/p_2$.
The remaining part is $(1/p_1, 1/p_2) \in J_0 \setminus (0, 1)^2$.
The case $J_0 \setminus \{ (0, 1)^2 \cup \{(0,0)\} \}$ is similar to the above.
In fact, using interpolation with
\begin{equation}\label{L1LbtoL2}
\Op(BS^{-(1-\rho)n/2}_{\rho, \rho}) 
\subset B(L^a \times L^b \to L^2),
\quad
1/a,\, 1/b > 0, 
\quad
1/a + 1/b = 1/2,
\end{equation}
which was also proved in \cite{MT-I}, 
and repeating the same argument just above, 
we obtain the desired result
from the fact for $J_0 \cap (0, 1)^2$ proved in the previous step.
Finally, we consider the case $(1/p_1, 1/p_2) = (0, 0)$.
However, this choice is invalid.
In fact, toward a contradiction, 
we assume $(1/p_1, 1/p_2) = (0, 0)$,
that is,
the boundedness
$\Op(BS^{-(1-\rho)n}_{\rho, \rho}) 
\subset B(L^\infty \times L^\infty \to L^p)$
holds.
Interpolating this with \eqref{L1LbtoL2},
we have 
\[
\Op(BS_{\rho, \rho}^{ -(1-\rho)n(1 - 1/\widetilde{p}_1 - 1/\widetilde{p}_2)} ) 
\subset
B(L^{\widetilde{p}_1} \times L^{\widetilde{p}_2} \to L^{\widetilde{p}}),
\]
where $(1/\widetilde{p}_1, 1/\widetilde{p}_2, 1/\widetilde{p})
=
(1-\theta) (1/a, 1/b, 1/2) + \theta (0, 0, 1/p)$
and $0<\theta<1$.
Then from the fact for $J_0 \cap (0, 1)^2$,
$1/\widetilde{p} = 1/{\widetilde{p}_1} + 1/{\widetilde{p}_2}$ holds,
which is identical with $\theta/p=0$.
However, this is impossible since we are now assuming $p<\infty$.
Therefore, we complete the proof for the case $p<\infty$.

Next, we consider the case $p=\infty$, namely,
our assumption is 
\begin{equation}\label{assuforcont}
\Op(BS^{m_{\rho}(p_1, p_2)}_{\rho, \rho}) \subset
B \big( H^{p_1} \times H^{p_2} \to BMO \big).
\end{equation}
For the case $p_1=p_2=\infty$ is clear, since $1/\infty + 1/\infty = 1/\infty$.
In what follows, we assume $(1/p_1, 1/p_2) \in [0,\infty)^2 \setminus \{(0,0)\}$.
We first consider the case $(1/p_1, 1/p_2) \in [0,1)^2 \setminus \{(0,0)\}$.
However, this choice is invalid. 
In fact, we have
$\Op(BS^{m_{\rho}(p_1, p_2)}_{\rho, \rho}) \subset
B (H^1 \times L^{p_2} \to L^{p_1'} )$
by duality of \eqref{assuforcont}.
Since $1 \le p_1' < \infty$, 
the conclusion for $p<\infty$ gives
$1+1/p_2=1/p_1'$, which is identical with $1/p_1+1/p_2=0$.
However, this is impossible.
Next, we consider the case $(1/p_1, 1/p_2) \in [0,\infty)^2 \setminus [0,1)^2$.
However, this case is also invalid. 
In fact, by interpolating between
\eqref{assuforcont} in this case and \eqref{L2L2L1}, 
we have
\[
\Op(BS_{\rho, \rho}^{ m_{\rho}(\widetilde{p}_1,\widetilde{p}_2)} ) \subset
B(L^{\widetilde{p}_1} \times L^{\widetilde{p}_2} \to L^{\widetilde{p}}),
\]
where
$(1/\widetilde{p}_1, 1/\widetilde{p}_2, 1/\widetilde{p})
=
(1-\theta) (1/2, 1/2, 1) + \theta (1/p_1, 1/p_2, 0)$ 
and $0 < \theta < 1$.
Since $1/\widetilde{p} = 1-\theta$, namely, 
$1 < \widetilde{p} <\infty$, we have 
$1/\widetilde{p} = 1/{\widetilde{p}_1} + 1/{\widetilde{p}_2}$
by the conclusion for $p<\infty$.
This coincides with $\theta(1/p_1+1/p_2)=0$.
However, this is again impossible.
Consequently, in the case $p=\infty$,
the possible choice is only $p_1=p_2=\infty$,
which means $1/p = 1/p_1 + 1/p_2$.

The proof of Theorem \ref{mainthm} 
is complete.



\section*{Acknowledgement}
The authors sincerely express their thanks to Professor Akihiko Miyachi and
Professor Naohito Tomita for valuable discussions and fruitful advices.


\end{document}